\documentclass[11pt]{amsart}

\usepackage[T1]{fontenc}
\usepackage[utf8]{inputenc}
\usepackage{newtxtext,newtxmath}
\usepackage{mathtools}
\usepackage{enumitem}
\usepackage{xcolor}
\usepackage{microtype}
\usepackage[
  colorlinks=true,
  linkcolor=red,
  citecolor=green!55!black,
  urlcolor=blue
]{hyperref}

\allowdisplaybreaks
\setlist[enumerate]{label=(\roman*),leftmargin=2.6em}

\newtheorem{theorem}{Theorem}[section]
\newtheorem{proposition}[theorem]{Proposition}
\newtheorem{lemma}[theorem]{Lemma}
\newtheorem{corollary}[theorem]{Corollary}
\newtheorem{remark}[theorem]{Remark}
\theoremstyle{definition}

\newcommand{\C}{\mathbb C}
\newcommand{\N}{\mathbb N}
\newcommand{\rad}{\operatorname{rad}}
\newcommand{\supp}{\operatorname{supp}}
\newcommand{\wtB}{\widetilde B}
\newcommand{\inv}{\operatorname{inv}}

\title[Completion of a Continuous Inverse Algebra]{The Completion of a Continuous Inverse Algebra Need Not Be a Continuous Inverse Algebra}
\author{Zongjian Han}

\newcommand{\authoraffiliation}{School of Mathematical Sciences, Tongji University, Shanghai 200092, China}
\newcommand{\authoremail}{dbln@tongji.edu.cn}
\makeatletter
\renewcommand{\@setauthors}{%
  \begingroup
  \def\thanks{\protect\thanks@warning}%
  \trivlist
  \centering
  \@topsep30\p@\relax
  \advance\@topsep by -\baselineskip
  \item\relax
  \author@andify\authors
  \def\\{\protect\linebreak}%
  {\normalsize\authors\par}%
  \vspace{7pt}%
  {\footnotesize\normalfont\authoraffiliation\par}%
  \vspace{2pt}%
  {\footnotesize\normalfont E-mail:\ \href{mailto:\authoremail}{\texttt{\authoremail}}\par}%
  \ifx\@empty\contribs
  \else
    ,\penalty-3\space\@setcontribs
    \@closetoccontribs
  \fi
  \endtrivlist
  \endgroup
}
\makeatother
\date{August 23, 2026}
\subjclass[2020]{Primary 46H05; Secondary 46A30, 16N40}
\keywords{continuous inverse algebra, completion problem, Fr\'echet algebra, locally $m$-convex algebra, unit group, dense nil subalgebra, Jacobson radical}

\begin{document}

\begin{abstract}
In 2006, Neeb asked whether the Hausdorff completion of a continuous inverse algebra (CIA) must again be a CIA.  The problem remained open in the general noncommutative case and was restated as Problem~10.6.6 in Gl\"ockner and Neeb's 2026 monograph, where the commutative case was proved affirmative.  We give a negative answer after twenty calendar years.  More sharply, we construct a Hausdorff metrizable locally $m$-convex complex CIA $B$ whose completion $\widehat B$ is a Fr\'echet locally $m$-convex algebra and for which inversion on $\widehat B^{\times}$ remains continuous, although $\widehat B^{\times}$ is not open.  Thus completion destroys precisely the local spectral stability at the identity that underlies the natural Lie-group structure of a CIA unit group.

The construction starts from Dixon's nonzero Jacobson-semisimple Banach algebra $C$ with a dense nil subalgebra $N$.  Endow the finite-support algebra $D:=N^{(\N)}$ with the topology inherited from the product $N^{\N}$ and set $B:=\C1\oplus D$.  Finite support makes every element of $D$ nilpotent, while Banach-algebra inversion in the unitization of $C$ yields continuity of inversion on $B$ without any locally uniform bound on nilpotence indices.  The completion is $\widehat B\cong\C1\oplus C^{\N}$.  A fixed $z\in C$ for which $1+z$ is noninvertible, shifted successively to coordinates tending to infinity, gives noninvertible elements converging to $1$.  The counterexample is necessarily noncommutative and identifies the precise boundary of the known commutative completion theorem.
\end{abstract}

\maketitle

\section{Introduction}

\subsection{Continuous inverse algebras and completion}

Continuous inverse algebras arose in the 1950s and form one of the basic sources of infinite-dimensional linear Lie groups; see the historical discussion preceding \cite[Definition~5.1.3]{GN2026}.  A unital locally convex algebra $A$ is a continuous inverse algebra, abbreviated CIA, if its unit group $A^{\times}$ is open and the inversion map
\[
\inv_A:A^{\times}\longrightarrow A,
\qquad
 a\longmapsto a^{-1},
\]
is continuous.  Because $A^{\times}$ is an open subset of the locally convex space $A$, these two conditions give $A^{\times}$ its natural locally convex Lie-group structure \cite[Example~5.1.4]{GN2026}.

The openness condition is a local spectral-stability statement: every sufficiently small perturbation of the identity must remain invertible.  Hausdorff completion, on the other hand, is the canonical analytic operation that replaces a locally convex algebra by a complete one, and continuous multiplication extends automatically to that completion.  It is therefore natural to ask whether the spectral and Lie-theoretic structure encoded by the CIA axioms also survives.  An affirmative theorem would permit one to pass from dense algebraic or smooth models to complete algebras without losing the corresponding unit Lie groups.  A particularly informative negative theorem should exhibit an obstruction created only at the completion boundary and invisible in the original dense algebra.

\subsection{A twenty-year problem}

Neeb posed the completion problem in print in 2006 as Problem~VIII.1: determine whether the completion of a CIA is again a CIA, or construct a counterexample \cite[Problem~VIII.1]{NeebSurvey}.  Gl\"ockner and Neeb restated the question in 2026 as Problem~10.6.6 and recorded the general case as open \cite[Problem~10.6.6]{GN2026}.  Its documented history therefore spans twenty calendar years, from 2006 to 2026.

The same 2026 monograph proves that the completion of every commutative CIA is again a CIA \cite[Proposition~10.6.10]{GN2026}.  Consequently, any counterexample must exploit a genuinely noncommutative mechanism.  For the negative result to identify the actual boundary of the theory, it is also important to keep the topology and multiplication as regular as possible and to determine which CIA axiom fails, rather than merely showing that the completion lies outside the class for an unspecified reason.

\subsection{Main theorem and its strength}

The required noncommutative input is supplied by Dixon \cite{Dixon1977}: there exists a nonzero complex Banach algebra $C$ such that
\[
\rad(C)=0
\]
and $C$ contains a dense nil subalgebra $N$.

\begin{theorem}[Completion counterexample]\label{thm:main}
There exists a Hausdorff metrizable locally $m$-convex complex CIA $B$ whose Hausdorff completion $\widehat B$ has the following properties:
\[
\begin{aligned}
&\widehat B\text{ is a Fr\'echet locally }m\text{-convex algebra},\\
&\inv_{\widehat B}:\widehat B^{\times}\longrightarrow\widehat B
  \text{ is continuous},\\
&\widehat B^{\times}\text{ is not open}.
\end{aligned}
\]
Consequently, $\widehat B$ is not a continuous inverse algebra.
\end{theorem}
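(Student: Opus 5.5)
We follow the construction announced in the abstract.  Let $C$ be Dixon's Banach algebra with $\rad(C)=0$ and dense nil subalgebra $N$; rescale the norm so that it is submultiplicative, and let $C^{+}=\C1\oplus C$ be the unitization.  Put $D:=N^{(\N)}$, the finitely supported sequences with coordinatewise multiplication, carrying the topology induced from the product $C^{\N}$, and set $B:=\C1\oplus D$ with seminorms
\[
p_n(\lambda 1+x)=|\lambda|+\sum_{k\le n}\|x_k\|,\qquad n\in\N .
\]
Each $p_n$ is submultiplicative, since it is the pullback of the unitization norm of the Banach algebra $C^n$ under a coordinate projection that is a homomorphism.  Hence $B$ is a Hausdorff metrizable locally $m$-convex algebra.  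The completion is identified with $\C1\oplus C^{\N}$, carrying the product topology on the second summand.  This uses two facts: $N^{(\N)}$ is dense in $C^{\N}$ because $N$ is dense in $C$ and sequences are truncated, and $\C1\oplus C^{\N}$ is complete as a countable projective limit of the Banach algebras $\C1\oplus C^n$.  So $\widehat B$ is Fr\'echet and locally $m$-convex.

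\emph{Invertibility.}  An element $\lambda1+x$ of $\widehat B$ is invertible if and only if $\lambda\neq0$ and, for every $k$, the element $\lambda+x_k$ is invertible in $C^{+}$.  In that case the inverse is $\lambda^{-1}1+y$ with
\[
y_k=(\lambda+x_k)^{-1}-\lambda^{-1},
\]
and no uniformity in $k$ is needed because the topology is the product topology.

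\emph{$B$ is a CIA.}  Take $\lambda\neq0$ and $x\in D$.  Every $x\in D$ is nilpotent: each coordinate lies in the nil algebra $N$ and only finitely many coordinates are nonzero.  Hence $\lambda1+x$ is invertible, with inverse given by a finite Neumann series, and that inverse lies in $B$.  So $B^{\times}=\{\lambda1+x:\lambda\neq0\}$, which is open because $\lambda$ is a continuous functional.  Inversion on $B$ is the restriction of inversion on $\widehat B^{\times}$, so continuity on $B$ follows from the next step.

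\emph{Continuity of inversion on $\widehat B^{\times}$.}  In the product topology it suffices to check each coordinate map
\[
\lambda1+x\longmapsto \lambda^{-1}\quad\text{and}\quad \lambda1+x\longmapsto(\lambda+x_k)^{-1}-\lambda^{-1}.
\]
Each is a composition of the continuous projection $\lambda1+x\mapsto\lambda+x_k$ into $C^{+}$ with inversion in the Banach algebra $C^{+}$, which is continuous on the open set $(C^{+})^{\times}$.  This is the point flagged in the abstract: the argument never needs a bound on nilpotency indices.

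\emph{Non-openness, the main obstacle.}  We need $z\in C$ with $1+z$ not invertible in $C^{+}$, that is, $-1\in\sigma(z)$.  Since $\rad(C)=0$ and $C\neq0$, $C$ is not a radical algebra, so some $w\in C$ has nonzero spectral radius.  Pick $\mu\in\sigma(w)\setminus\{0\}$ and set $z=-w/\mu$; then $-1\in\sigma(z)$.  Now let $e^{(n)}\in C^{\N}$ have $z$ in coordinate $n$ and $0$ elsewhere.  Then $1+e^{(n)}\to1$ in the product topology, but by the invertibility criterion above $1+e^{(n)}\notin\widehat B^{\times}$.  Therefore $\widehat B^{\times}$ is not a neighbourhood of $1$, so it is not open, and $\widehat B$ is not a CIA.

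The delicate points are the existence of $z$, which is exactly where Jacobson semisimplicity of Dixon's algebra enters, and the verification that the completion is precisely $\C1\oplus C^{\N}$.
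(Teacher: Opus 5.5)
Your proposal is correct and follows the paper's proof essentially step for step: the same algebra $B=\C1\oplus N^{(\N)}$ with the product-induced topology, the same identification $\widehat B\cong\C1\oplus C^{\N}$ with its coordinatewise unit criterion and continuous coordinatewise inversion, and the same escaping sequence $1+e^{(n)}\to1$ of nonunits. The differences are only minor variations. You use sum rather than max seminorms, and you obtain continuity of $\inv_B$ by restricting $\inv_{\widehat B}$, whereas the paper proves it separately via $\Theta$ on $N$, which is just the restriction of $\Theta_C$. You also produce $z$ from a nonzero spectral value of a non-quasinilpotent element, whereas the paper obtains it directly from the characterization $1-yx\notin(C^{\#})^{\times}$ of the radical. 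All of these variations are valid.
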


This settles the completion problem negatively and is stronger than its bare negation.  Both algebras remain within the standard locally $m$-convex and metrizable framework; the completion is Fr\'echet; multiplication extends continuously; the completed unit group can be described exactly; and inversion remains continuous on that full unit group.  The sole failed axiom is openness:
\[
1\in\overline{\widehat B\setminus\widehat B^{\times}}.
\]
Thus the obstruction is not nonmetrizability, failure of completeness, discontinuity of multiplication, or discontinuity of inversion.  It is precisely a loss of local spectral stability at the identity.

\subsection{Construction and proof mechanism}

Set
\[
D:=N^{(\N)},
\qquad
B:=\C1\oplus D,
\]
where $D$ carries the topology inherited from the product $N^{\N}$.  The construction deliberately combines two features with opposite effects.  Finite support converts coordinatewise nilpotence into nilpotence of each entire sequence, and hence
\[
B^{\times}=\{\lambda1+d:\lambda\ne0\}.
\]
The product topology, however, allows a defect in a coordinate tending to infinity to escape every fixed neighborhood test.

The only delicate point before completion is continuity of inversion.  Nilpotence indices in $N$ need not be locally bounded, so a neighborhoodwise finite Neumann expansion is unavailable.  Instead, inversion in the Banach unitization $C^{\#}$ defines one continuous correction map
\[
\Theta(\lambda,n)
=(\lambda1+n)^{-1}-\lambda^{-1}1,
\]
which can be applied coordinatewise.  This proves directly that $B$ is a CIA without imposing any uniform nilpotence hypothesis.

Density of $N$ in $C$ then gives
\[
\widehat B\cong\C1\oplus C^{\N}
\]
with the product topology.  Jacobson semisimplicity supplies a fixed $z\in C$ such that $1+z$ is not invertible in $C^{\#}$.  If $e^{(k)}$ denotes the sequence having value $z$ in the $k$th coordinate and zero elsewhere, then
\[
e^{(k)}\longrightarrow0,
\qquad
1+e^{(k)}\notin\widehat B^{\times}
\quad(k\ge1).
\]
This proves non-openness.  At the same time, the exact coordinatewise unit criterion shows that inversion on $\widehat B^{\times}$ remains continuous, isolating the failure to one axiom.

Section~2 supplies the Banach-algebra input.  Sections~3 and~4 construct the finite-support CIA and establish continuity of its inversion.  Section~5 identifies the completion and its full unit group.  Section~6 proves that noninvertibility escapes to the identity.  Section~7 records the negative answer and the exact commutative--noncommutative boundary.

\section{The Banach-algebra input}

The later escape sequence requires one fixed element $z\in C$ whose perturbation of the identity is noninvertible.  We now derive this witness from Dixon's semisimple Banach algebra.

\subsection{Unitization and the radical witness}

Let $C$ be a complex Banach algebra, not assumed unital.  Its Banach unitization is
\[
C^{\#}:=\C1\oplus C
\]
with multiplication
\begin{equation}\label{eq:unitization-product}
(\lambda1+x)(\mu1+y)
=
\lambda\mu1+\lambda y+\mu x+xy
\end{equation}
and norm
\[
\|\lambda1+x\|_{\#}:=|\lambda|+\|x\|_C.
\]
The unitization supplies the identity used in all invertibility statements below.

We use the standard characterization of the Jacobson radical; see, for example, \cite[Chapter~II, Section~3]{Rickart1960}:
\begin{equation}\label{eq:radical-characterization}
\rad(C)
=
\left\{
 x\in C:
 1-yx\in(C^{\#})^{\times}
 \text{ for every }y\in C^{\#}
\right\}.
\end{equation}
Thus $\rad(C)=0$ means that every nonzero $x\in C$ admits a multiplier $y\in C^{\#}$ which destroys invertibility of $1-yx$.

\begin{lemma}[Noninvertible perturbation]\label{lem:bad-z}
Let $C$ be a nonzero complex Banach algebra satisfying $\rad(C)=0$.  Then there is an element $z\in C$ such that
\[
1+z\notin(C^{\#})^{\times}.
\]
\end{lemma}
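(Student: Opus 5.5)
The plan is to derive $z$ directly from the radical characterization \eqref{eq:radical-characterization}. Since $C\neq 0$, I pick a nonzero $x\in C$. Because $\rad(C)=0$, the element $x$ does not lie in $\rad(C)$, so by \eqref{eq:radical-characterization} there is some $y\in C^{\#}$ for which $1-yx\notin(C^{\#})^{\times}$. I then set
\[
z:=-yx .
\]
This gives $1+z=1-yx\notin(C^{\#})^{\times}$, which is exactly the required noninvertibility.

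The one remaining point is that $z$ must lie in $C$ itself and not merely in $C^{\#}$. For this I write $y=\mu1+w$ with $\mu\in\C$ and $w\in C$. The unitization product \eqref{eq:unitization-product} then gives
\[
yx=\mu x+wx\in C,
\]
because $C$ is a two-sided ideal in $C^{\#}$. So $z=-\mu x-wx\in C$, as needed.

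I expect no genuine obstacle. The only care required is bookkeeping: invertibility is always understood in $C^{\#}$, so the statement makes sense even when $C$ is nonunital, and the ideal property of $C$ in $C^{\#}$ ensures $z\in C$. The role of the hypothesis $C\neq 0$ is simply to provide a nonzero $x$. Without it, the condition $\rad(C)=0$ would hold vacuously and no witness $z$ could exist, since $1+0=1$ is invertible.
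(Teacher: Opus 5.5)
Your proposal is correct and follows the paper's proof essentially verbatim: choose $0\ne x\in C$, use \eqref{eq:radical-characterization} to obtain $y\in C^{\#}$ with $1-yx\notin(C^{\#})^{\times}$, and set $z:=-yx$, which lies in $C$ because $C$ is a two-sided ideal of $C^{\#}$. Your decomposition $y=\mu1+w$ only spells out this ideal property, which the paper cites directly.
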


\begin{proof}
Choose
\[
0\ne x\in C.
\]
Since $\rad(C)=0$, we have $x\notin\rad(C)$.  By \eqref{eq:radical-characterization}, there exists $y\in C^{\#}$ such that
\[
1-yx\notin(C^{\#})^{\times}.
\]
Because $C$ is a two-sided ideal of $C^{\#}$,
\[
yx\in C.
\]
Define
\[
z:=-yx\in C.
\]
Then
\[
1+z=1-yx\notin(C^{\#})^{\times}.
\]
\end{proof}

\subsection{Dixon's dense nil input}

An associative algebra $N$ is called nil if
\[
(\forall n\in N)(\exists r=r(n)\ge1)\qquad n^r=0.
\]
The exponent is allowed to depend on the element.  Nil is therefore strictly weaker than nilpotent.

\begin{theorem}[Dixon]\label{thm:dixon}
There exists a nonzero complex Banach algebra $C$ satisfying
\[
\rad(C)=0
\]
and containing a dense nil subalgebra
\[
N\subseteq C.
\]
\end{theorem}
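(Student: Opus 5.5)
This result is Dixon's theorem, so within this paper the natural course is to cite \cite{Dixon1977} rather than reprove it. Still, the route to such an example is worth recording. The strategy is to build a free-type nil algebra, complete it in a suitably weighted norm, and check two things: the nil subalgebra stays dense, and the completion has zero Jacobson radical.

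\emph{Step 1 (the nil algebra).} Start from a countably generated nil but non-nilpotent complex algebra $N_0$. A Golod--Shafarevich type construction supplies one: take the free algebra on generators $x_1,x_2,\dots$ and factor out a graded ideal generated by high powers of an enumeration of all elements with coefficients in $\mathbb{Q}(i)$. This must be arranged so that every element, not just the rational ones, becomes nilpotent; Dixon's refinement handles this by requiring the relations to kill all elements of bounded degree and support at suitable stages.

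\emph{Step 2 (the norm).} Equip $N_0$ with a weighted $\ell^1$-norm on a monomial basis, $\|\sum a_w w\|=\sum|a_w|\,\omega(w)$, where $\omega$ is submultiplicative on words, so that the norm is an algebra norm. Let $C$ be the completion and $N$ the image of $N_0$. The weights are chosen to decay fast enough in the length of words in late generators that the relations of Step~1 are compatible with the norm. That compatibility is what keeps the quotient norm Hausdorff and $C$ nonzero. Density of $N$ is then automatic.

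\emph{Step 3 (semisimplicity).} The main obstacle is showing $\rad(C)=0$. For this I would use the characterization \eqref{eq:radical-characterization} through spectral radius. It suffices to find, for each nonzero $x\in C$, some $y$ with $\lim\|(yx)^n\|^{1/n}>0$, because quasinilpotent elements are exactly those of zero spectral radius. The idea is to choose the weights so that words of the form $(y x)^n$, with $y$ an unused late generator, retain norm bounded below geometrically. Nilpotence is imposed only on elements of $N_0$, while $yx$ with $x$ a limit need not lie in $N_0$. Balancing these two requirements in the choice of $\omega$ is exactly the delicate estimate in Dixon's paper. Rather than redo it, I would invoke \cite{Dixon1977} at this point.
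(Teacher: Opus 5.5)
Citing Dixon's note is exactly what the paper does: it states this theorem with a source reference and gives no proof. So on what you actually rely on, your proposal agrees with it.

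The sketch you offer as ``the route'' contains a step that cannot work. In Step~3 the witness $y$ is an unused late generator, hence an element of $N_0$. But the nonzero elements of $N_0$ themselves must also be shown to lie outside $\rad(C)$. For $0\ne x\in N_0$ the product $yx$ lies in the nil algebra $N_0$, so $(yx)^n=0$ for large $n$ and $\lim_n\|(yx)^n\|^{1/n}=0$, whatever the weights. The same holds for every $y\in\C1+N_0$. So on the dense nil subalgebra the witness must be a genuine limit in $C\setminus N$, and your mechanism does not provide one. Step~2 is also loose. A weighted $\ell^1$-norm on a monomial basis of a quotient by non-monomial relations need not be submultiplicative. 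One norms the free algebra and divides by the closure of the relation ideal, and then the whole difficulty is that this closure is proper.

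In fact no semisimplicity estimate is needed. If $A$ is a Banach algebra with a dense nil subalgebra $\mathcal N$ and $A$ is not radical, then $C:=A/\rad(A)$ is a nonzero Banach algebra with $\rad(C)=0$, and the image of $\mathcal N$ is a dense nil subalgebra of $C$.

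A non-radical $A$ comes from Kakutani's weighted shift $T\epsilon_n=\alpha_n\epsilon_{n+1}$ on $\ell^2$, where $\alpha_n=e^{-k}$ if $2^k$ is the exact power of $2$ dividing $n$.
\begin{enumerate}
\item Evaluating $T^{2^j-1}$ at $\epsilon_1$ gives $\|T^{2^j-1}\|\ge e^{-2^j}$, hence $\rho(T)\ge e^{-1}$.
\item Let $T_K$ be $T$ with the weights at multiples of $2^K$ replaced by $0$. Then $\|T-T_K\|=e^{-K}$.
\item Any product of $2^K$ factors from $\{T_1,\dots,T_K\}$ vanishes, because among $2^K$ consecutive indices one is a multiple of $2^K$.
\end{enumerate}
Hence $\mathcal N:=\bigcup_K\mathrm{alg}(T_1,\dots,T_K)$ is an increasing union of nilpotent algebras, so it is nil. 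Its closure $A$ in the bounded operators contains $T$, with $\rho_A(T)=\rho(T)>0$, so $A$ is not radical. This is a complete short proof of the statement, with no Golod--Shafarevich input and no balancing of weights.
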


\noindent\emph{Source.}
This is the construction of Dixon \cite{Dixon1977}.  In the present paper, $C$ and $N$ always denote one fixed pair supplied by Theorem~\ref{thm:dixon}.

The density of a nil algebra forces the ambient Banach algebra to be nonunital.  This elementary point ensures that the unitization $C^{\#}$ used above is the ordinary adjoining of a new identity.

\begin{lemma}[The Dixon algebra is nonunital]\label{lem:nonunital}
The Banach algebra $C$ in Theorem~\ref{thm:dixon} has no identity.
\end{lemma}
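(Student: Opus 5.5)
Suppose, for a contradiction, that $C$ has an identity $e$, so $ex=xe=x$ for all $x\in C$. The plan is to use density of $N$ to find a nilpotent element close to $e$, and then to show that such an element must be invertible in $C$. Invertibility and nilpotence are incompatible in a nonzero algebra.

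\textbf{Step 1.} Since $C\neq0$, we have $e\neq0$ and hence $\|e\|_C\ge1$. By density of $N$ in $C$, choose $n\in N$ with $\|e-n\|_C<1$. If $\|e\|_C>1$, pass to the equivalent norm given by the left regular representation, or simply rescale the estimate; all we need is $\|(e-n)^k\|_C\to0$ geometrically. This can be arranged by choosing $n$ with $\|e-n\|_C<\|e\|_C^{-1}$ and using $\|(e-n)^k\|\le\|e-n\|^k$.

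\textbf{Step 2.} The Neumann series $\sum_{k\ge0}(e-n)^k$, with $(e-n)^0:=e$, converges in the Banach algebra $C$. The usual telescoping identity $\bigl(e-(e-n)\bigr)\sum_{k=0}^{m}(e-n)^k=e-(e-n)^{m+1}$ shows that $n$ is invertible in $C$ with respect to the identity $e$.

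\textbf{Step 3.} Because $N$ is nil, $n^r=0$ for some $r\ge1$. Multiplying by $n^{-1}$ a total of $r$ times gives $e=n^{-r}n^r=0$. This contradicts $e\ne0$, which holds since $C\neq0$. Hence $C$ is nonunital.

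The only point requiring care is the norm normalization in Step 1, since $\|e\|_C$ need not equal $1$. Choosing $\|e-n\|_C$ small enough, as above, removes this issue. Alternatively, one can argue with the spectral radius: nilpotent elements have spectral radius $0$, while $\rho(e-n)\le\|e-n\|_C<1$ forces $1\notin\sigma(e-n)$, that is, $n$ is invertible. Either route is routine.
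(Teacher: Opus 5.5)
Your proposal is correct and is the paper's proof: approximate the identity by some $n\in N$ with $\|1_C-n\|_C<1$, invert $n$ via the Neumann series, and contradict $n^r=0$. The normalization worry in Step~1 is unnecessary. Submultiplicativity gives $\|(e-n)^k\|_C\le\|e-n\|_C^k$ for all $k\ge1$ regardless of $\|e\|_C$, so $\|e-n\|_C<1$ already suffices, and that is the condition the paper uses.
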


\begin{proof}
Assume that $C$ has an identity $1_C$.  Since $N$ is dense in $C$, there is $n\in N$ such that
\[
\|1_C-n\|_C<1.
\]
The Banach Neumann series gives
\[
n^{-1}
=
\sum_{j=0}^{\infty}(1_C-n)^j,
\]
so $n$ is invertible in $C$.  Since $N$ is nil, there exists $r\ge1$ such that
\[
n^r=0.
\]
Multiplication by $n^{-r}$ yields
\[
1_C=0,
\]
contrary to $C\ne0$.
\end{proof}

By Lemma~\ref{lem:bad-z}, we fix once and for all an element
\begin{equation}\label{eq:fixed-z}
z\in C,
\qquad
1+z\notin(C^{\#})^{\times}.
\end{equation}
This element is the noninvertibility witness placed into the escaping coordinates in Section~\ref{sec:escape}.

\section{The finite-support algebra}

The algebraic input $N$ is nil only elementwise.  To convert coordinatewise nilpotence into nilpotence of complete sequence elements, we now impose finite support.  To make defects escape under completion, we simultaneously retain the product topology.

\subsection{The product-subspace topology}

Define the finite-support algebra
\begin{equation}\label{eq:def-D}
D:=N^{(\N)}
=
\left\{
 d=(d_j)_{j\ge1}\in N^{\N}:
 \supp(d):=\{j:d_j\ne0\}\text{ is finite}
\right\}.
\end{equation}
Multiplication is coordinatewise:
\begin{equation}\label{eq:D-product}
(de)_j:=d_je_j
\qquad(j\ge1).
\end{equation}

The topology on $D$ is the subspace topology inherited from the product $N^{\N}$, where $N$ has the norm topology induced by $C$.  Equivalently, it is generated by
\begin{equation}\label{eq:D-seminorms}
r_m(d):=\max_{1\le j\le m}\|d_j\|_C
\qquad(m\ge1).
\end{equation}
This is not the locally convex direct-sum topology.  In particular, a neighborhood restricts only finitely many coordinates.

We adjoin a scalar identity and define
\begin{equation}\label{eq:def-B}
B:=\C1\oplus D.
\end{equation}
Its multiplication is
\begin{equation}\label{eq:B-product}
(\lambda1+d)(\mu1+e)
=
\lambda\mu1+\lambda e+\mu d+de.
\end{equation}
The locally convex topology on $B$ is generated by
\begin{equation}\label{eq:B-seminorms}
p_m(\lambda1+d)
:=
|\lambda|+r_m(d)
=
|\lambda|+\max_{1\le j\le m}\|d_j\|_C.
\end{equation}
The algebra $B$ is the candidate CIA whose completion will fail to be a CIA.

\subsection{Local \texorpdfstring{$m$}{m}-convexity}

The coordinatewise multiplication must be compatible with the topology in \eqref{eq:B-seminorms}.  The decisive estimate is submultiplicativity of every $p_m$.

\begin{proposition}[Locally $m$-convex structure]\label{prop:locally-m-convex}
For every $m\ge1$ and $a,b\in B$,
\[
p_m(ab)\le p_m(a)p_m(b).
\]
Consequently, $B$ is a Hausdorff metrizable locally $m$-convex algebra.
\end{proposition}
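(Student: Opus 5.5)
The plan is to verify submultiplicativity by direct expansion, then read off the topological properties from the countable family $(p_m)$. Write $a=\lambda1+d$ and $b=\mu1+e$. By \eqref{eq:B-product}, $ab=\lambda\mu1+(\lambda e+\mu d+de)$, so for each $j\le m$ the $j$th coordinate of the $D$-part is $\lambda e_j+\mu d_j+d_je_j$. Since $\|\cdot\|_C$ is a submultiplicative Banach-algebra norm,
\[
\|\lambda e_j+\mu d_j+d_je_j\|_C\le|\lambda|\,r_m(e)+|\mu|\,r_m(d)+r_m(d)\,r_m(e).
\]
Taking the maximum over $j\le m$ and adding $|\lambda\mu|$ gives
\[
p_m(ab)\le|\lambda||\mu|+|\lambda|r_m(e)+|\mu|r_m(d)+r_m(d)r_m(e)=\bigl(|\lambda|+r_m(d)\bigr)\bigl(|\mu|+r_m(e)\bigr),
\]
which is exactly $p_m(a)p_m(b)$.

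For the consequences, I would first note that each $p_m$ is a seminorm: it is the sum of $|\lambda|$ and a finite maximum of norms of linear coordinate maps, and the decomposition $B=\C1\oplus D$ is direct. The family $(p_m)$ is increasing in $m$. The topology is therefore locally convex, and it is generated by countably many submultiplicative seminorms, so it is locally $m$-convex. In particular, multiplication is jointly continuous.

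Metrizability follows because the family is countable: $\sum_m 2^{-m}\min(1,p_m(a-b))$ is a compatible translation-invariant metric. For the Hausdorff property, suppose $p_m(\lambda1+d)=0$ for all $m$. Then $\lambda=0$ and $\|d_j\|_C=0$ for every $j$, since each $j$ satisfies $j\le m$ for some $m$. Hence $d=0$.

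The only point needing care, and it is minor, is that the identity summand contributes $|\lambda|$ additively. The product estimate works only because the cross terms $\lambda e$ and $\mu d$ are absorbed exactly by expanding $(|\lambda|+r_m(d))(|\mu|+r_m(e))$. This is the standard unitization computation for the norm $\|\cdot\|_{\#}$, applied coordinatewise over $j\le m$.
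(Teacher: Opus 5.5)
Your proposal is correct and follows essentially the same route as the paper. You expand $ab$ coordinatewise, bound each coordinate $j\le m$ using submultiplicativity of $\|\cdot\|_C$, and factor the bound as $(|\lambda|+r_m(d))(|\mu|+r_m(e))$; Hausdorffness and metrizability then follow from the countable point-separating family $(p_m)$, with your explicit metric and seminorm check simply making explicit what the paper leaves implicit.
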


\begin{proof}
Write
\[
a=\lambda1+d,
\qquad
b=\mu1+e,
\]
and set
\[
D_m:=\max_{1\le j\le m}\|d_j\|_C,
\qquad
E_m:=\max_{1\le j\le m}\|e_j\|_C.
\]
By \eqref{eq:B-product},
\[
(ab)_j=\lambda e_j+\mu d_j+d_je_j.
\]
The Banach algebra inequality in $C$ gives
\begin{align*}
p_m(ab)
&=
|\lambda\mu|
+
\max_{1\le j\le m}
\|\lambda e_j+\mu d_j+d_je_j\|_C\\
&\le
|\lambda\mu|+|\lambda|E_m+|\mu|D_m+D_mE_m\\
&=
(|\lambda|+D_m)(|\mu|+E_m)\\
&=
p_m(a)p_m(b).
\end{align*}
Thus each $p_m$ is submultiplicative.  The countable family $(p_m)_{m\ge1}$ separates points, so $B$ is Hausdorff and metrizable.
\end{proof}

\subsection{Nilpotence of finite-support elements}

Finite support is used here for the first time.  It turns the coordinate-dependent nilpotence exponents in $N$ into one exponent for each element of $D$.

\begin{lemma}[Finite-support nilpotence]\label{lem:D-nil}
The algebra $D$ is nil.  More precisely, for every $d\in D$ there exists $r=r(d)\ge1$ such that
\[
d^r=0.
\]
\end{lemma}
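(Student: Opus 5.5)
The argument will rest on two facts: the support of $d$ is finite, and multiplication in $D$ is coordinatewise by \eqref{eq:D-product}. Fix $d\in D$ and put $S:=\supp(d)$, a finite subset of $\N$.

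If $S=\emptyset$, then $d=0$ and $r=1$ already gives $d^r=0$. Otherwise, each $j\in S$ has $d_j\in N$. Since $N$ is nil, we may choose an exponent $r_j\ge1$ with $d_j^{r_j}=0$. We then set
\[
r:=\max_{j\in S} r_j,
\]
which is a well-defined positive integer because $S$ is finite and nonempty.

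Next, an induction on $k$ using \eqref{eq:D-product} gives $(d^k)_j=d_j^k$ for every $j$ and every $k\ge1$. For $j\notin S$ we have $d_j=0$, so $(d^r)_j=0^r=0$. For $j\in S$ we have $r\ge r_j$, so
\[
d_j^r=d_j^{r-r_j}d_j^{r_j}=0,
\]
where the case $r=r_j$ is read directly as $d_j^{r_j}=0$, so no factor $d_j^0$ in the nonunital algebra $N$ is needed. Every coordinate of $d^r$ therefore vanishes, that is, $d^r=0$.

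None of these steps is a real obstacle. The only point that needs care is that $r$ is defined as a maximum over finitely many indices. Finite support is exactly what makes this maximum exist, since the exponents $r_j$ need not be bounded over an infinite set of coordinates. This is why the statement holds for $N^{(\N)}$ but would fail for $N^{\N}$.
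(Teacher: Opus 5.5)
Your proof is correct and is the paper's argument: take $r$ to be the maximum of the coordinate nilpotence exponents over the finite support, then use coordinatewise multiplication to get $(d^r)_j=d_j^r=0$ for every $j$. The induction $(d^k)_j=d_j^k$ and your care to avoid $d_j^0$ in the nonunital algebra $N$ only spell out steps the paper leaves implicit.
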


\begin{proof}
Fix $d\in D$ and write
\[
F:=\supp(d).
\]
The set $F$ is finite by \eqref{eq:def-D}.  For every $j\in F$, the element $d_j\in N$ is nilpotent, so there exists $r_j\ge1$ such that
\[
d_j^{r_j}=0.
\]
Set
\[
r:=\max_{j\in F}r_j,
\]
with $r:=1$ if $F=\varnothing$.  Then for every $j\ge1$,
\[
(d^r)_j=d_j^r=0.
\]
Hence $d^r=0$.
\end{proof}

\subsection{The unit group}

Lemma~\ref{lem:D-nil} reduces invertibility in $B$ to the scalar coordinate.  This gives an open unit group before continuity of inversion is addressed.

\begin{proposition}[Units of the finite-support algebra]\label{prop:B-units}
The unit group of $B$ is
\begin{equation}\label{eq:B-units}
B^{\times}
=
\{\lambda1+d:\lambda\in\C^{\times},\ d\in D\}.
\end{equation}
In particular, $B^{\times}$ is open in $B$.
\end{proposition}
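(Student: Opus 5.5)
The plan is to prove the two inclusions in \eqref{eq:B-units} separately and then read off openness from the scalar seminorm.

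\emph{Units have nonzero scalar part.} Consider the scalar character $\chi:B\to\C$, $\chi(\lambda1+d):=\lambda$. By \eqref{eq:B-product}, $\chi$ is a unital algebra homomorphism, since $D$ is an ideal and the scalar part of a product is $\lambda\mu$. If $a=\lambda1+d$ is invertible with inverse $b$, applying $\chi$ to $ab=1$ gives $\chi(a)\chi(b)=1$. Hence $\lambda\neq0$.

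\emph{Elements with nonzero scalar part are units.} Fix $\lambda\in\C^{\times}$ and $d\in D$. By Lemma~\ref{lem:D-nil}, there is $r\ge1$ with $d^r=0$. Write $\lambda1+d=\lambda(1+\lambda^{-1}d)$ and set
\[
b:=\lambda^{-1}\sum_{k=0}^{r-1}(-\lambda^{-1}d)^k .
\]
This is a finite sum, so $b\in B$. The telescoping identity
\[
(1+u)\sum_{k=0}^{r-1}(-u)^k=1-(-u)^r,
\]
taken with $u=\lambda^{-1}d$, together with $u^r=0$, gives $(\lambda1+d)b=1$. The same computation on the other side gives $b(\lambda1+d)=1$, because $u$ commutes with its own powers. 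So $\lambda1+d\in B^{\times}$. This direction is the only place where finite support enters, and it enters only through Lemma~\ref{lem:D-nil}. Note that no uniform bound on $r$ is needed for the purely algebraic statement.

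\emph{Openness.} The map $\chi$ is continuous, because $|\chi(a)|\le p_1(a)$ by \eqref{eq:B-seminorms}. Therefore $B^{\times}=\chi^{-1}(\C^{\times})$ is the preimage of an open set, and so it is open. Concretely, if $\lambda\ne0$, then every $a'$ with $p_1(a'-(\lambda1+d))<|\lambda|$ has nonzero scalar part.

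I expect no genuine obstacle. The only point requiring care is checking that $\chi$ is multiplicative, which is immediate from the product formula.
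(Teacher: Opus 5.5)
Your proposal is correct and follows essentially the same route as the paper: the scalar character $\chi$ shows that units have $\lambda\neq0$, and Lemma~\ref{lem:D-nil} gives a finite Neumann-series inverse when $\lambda\ne0$. Openness then follows because $B^{\times}=\chi^{-1}(\C^{\times})$ with $\chi$ continuous. Your explicit telescoping check, the two-sided verification of the inverse, and the bound $|\chi(a)|\le p_1(a)$ supply details that the paper leaves implicit.
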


\begin{proof}
Define the continuous unital algebra homomorphism
\[
\chi:B\longrightarrow\C,
\qquad
\chi(\lambda1+d):=\lambda.
\]
If $\lambda1+d\in B^{\times}$, then
\[
\lambda=\chi(\lambda1+d)\in\C^{\times}.
\]
Thus invertibility implies $\lambda\ne0$.

Conversely, let $\lambda\ne0$.  By Lemma~\ref{lem:D-nil}, choose $r\ge1$ such that $d^r=0$.  Then
\begin{align}
(\lambda1+d)^{-1}
&=
\lambda^{-1}(1+\lambda^{-1}d)^{-1}\notag\\
&=
\lambda^{-1}
\sum_{\ell=0}^{r-1}
(-\lambda^{-1}d)^{\ell}
\in B.
\label{eq:finite-neumann}
\end{align}
This proves \eqref{eq:B-units}.  Finally,
\[
B^{\times}=\chi^{-1}(\C^{\times})
\]
is open because $\chi$ is continuous.
\end{proof}

\section{Continuity of inversion on the finite-support algebra}

Formula \eqref{eq:finite-neumann} is pointwise finite, but its truncation index $r(d)$ may be unbounded on every neighborhood.  It therefore does not by itself establish continuity of inversion.  We replace the varying polynomial formula by a single continuous map inherited from the Banach unitization $C^{\#}$.

\subsection{The coordinate inverse map}

For $\lambda\in\C^{\times}$ and $n\in N$, the element $\lambda1+n$ is invertible in $C^{\#}$ because $n$ is nilpotent.  Its inverse has scalar part $\lambda^{-1}$.  This motivates the correction map
\begin{equation}\label{eq:theta}
\Theta:\C^{\times}\times N\longrightarrow N,
\qquad
\Theta(\lambda,n)
:=(\lambda1+n)^{-1}-\lambda^{-1}1.
\end{equation}

\begin{lemma}[Continuity of the coordinate inverse]\label{lem:theta-continuous}
The map $\Theta$ in \eqref{eq:theta} is well-defined and continuous.
\end{lemma}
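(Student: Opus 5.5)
The proof splits into well-definedness, i.e.\ $\Theta(\lambda,n)$ exists and lies in $N$, and continuity. Both rest on the finite Neumann expansion from Proposition~\ref{prop:B-units} for a single nilpotent element, together with the fact that inversion on $(C^{\#})^{\times}$ is continuous because $C^{\#}$ is a unital Banach algebra.

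For well-definedness, fix $\lambda\in\C^{\times}$ and $n\in N$, and choose $r\ge1$ with $n^r=0$. The computation of \eqref{eq:finite-neumann}, now carried out in $C^{\#}$, gives
\[
(\lambda1+n)^{-1}=\lambda^{-1}\sum_{\ell=0}^{r-1}(-\lambda^{-1}n)^{\ell}.
\]
The $\ell=0$ term is exactly $\lambda^{-1}1$. Hence
\[
\Theta(\lambda,n)=\sum_{\ell=1}^{r-1}(-1)^{\ell}\lambda^{-\ell-1}n^{\ell}.
\]
Each power $n^{\ell}$ with $\ell\ge1$ lies in $N$ because $N$ is a subalgebra, and $N$ is a linear subspace, so $\Theta(\lambda,n)\in N$. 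The value does not depend on the choice of $r$, since it is the unique inverse in $C^{\#}$ minus $\lambda^{-1}1$.

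For continuity, the point is to avoid the polynomial formula, whose degree $r(n)$ is not locally bounded, and write $\Theta$ as a composition of continuous maps. The map $(\lambda,n)\mapsto\lambda1+n$ from $\C^{\times}\times N$ into $C^{\#}$ is continuous for the norm $\|\cdot\|_{\#}$. Its image lies in the unit group $(C^{\#})^{\times}$, by the previous step. Inversion on $(C^{\#})^{\times}$ is continuous; the standard Banach-algebra estimate is
\[
\|a^{-1}-b^{-1}\|\le\|a^{-1}\|\,\|a-b\|\,\|b^{-1}\|,
\]
combined with local boundedness of $\|b^{-1}\|$ coming from the Neumann series. Finally, subtracting $\lambda^{-1}1$ is continuous in $\lambda$ on $\C^{\times}$. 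So $\Theta$ is continuous as a map into $C^{\#}$. Its values lie in $N\subseteq C$, and the norm $\|\cdot\|_{\#}$ restricted to $C$ is $\|\cdot\|_C$, so $\Theta$ is continuous as a map into $N$ with the norm topology induced by $C$.

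The only real obstacle is that the explicit finite formula gives no uniform control. It is bypassed by using the continuity of inversion in the Banach algebra $C^{\#}$ and noting that the finite formula is only needed pointwise, to locate the values in $N$. Nothing beyond standard facts about Banach unitizations is required.
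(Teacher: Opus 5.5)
Your proposal is correct and follows essentially the same route as the paper. You use the finite Neumann expansion only pointwise to place $\Theta(\lambda,n)$ in $N$, and you obtain continuity from inversion in the Banach unitization $C^{\#}$ followed by corestriction to $N$ with its subspace topology. Your extra remarks (independence of the choice of $r$, the explicit inversion estimate, and $\|\cdot\|_{\#}$ restricting to $\|\cdot\|_C$ on $C$) are harmless refinements of that argument.
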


\begin{proof}
Fix $(\lambda,n)\in\C^{\times}\times N$.  Choose $r\ge1$ such that $n^r=0$.  Then
\begin{align*}
(\lambda1+n)^{-1}
&=
\lambda^{-1}
\sum_{\ell=0}^{r-1}(-\lambda^{-1}n)^{\ell},\\
\Theta(\lambda,n)
&=
\sum_{\ell=1}^{r-1}
(-1)^{\ell}\lambda^{-\ell-1}n^{\ell}
\in N.
\end{align*}
Thus $\Theta$ is well-defined with values in $N$.

The map
\[
\C^{\times}\times N\longrightarrow(C^{\#})^{\times},
\qquad
(\lambda,n)\longmapsto\lambda1+n,
\]
is continuous.  Since $C^{\#}$ is a Banach algebra, its inversion map is continuous.  Hence the composite
\[
(\lambda,n)
\longmapsto
(\lambda1+n)^{-1}-\lambda^{-1}1
\]
is continuous as a map into $C$.  Its image lies in $N$, and $N$ carries the subspace topology inherited from $C$.  Therefore its corestriction \eqref{eq:theta} is continuous.
\end{proof}

\subsection{The CIA property}

The product topology is characterized by coordinatewise continuity.  Applying Lemma~\ref{lem:theta-continuous} in every coordinate therefore gives the required global inversion map on $B$.

\begin{theorem}[The finite-support algebra is a CIA]\label{thm:B-CIA}
The algebra $B$ defined in \eqref{eq:def-B} is a Hausdorff metrizable locally $m$-convex continuous inverse algebra.
\end{theorem}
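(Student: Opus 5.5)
We assemble the three ingredients already proved.  Proposition~\ref{prop:locally-m-convex} shows that $B$ is a Hausdorff metrizable locally $m$-convex algebra, so multiplication is jointly continuous.  Proposition~\ref{prop:B-units} shows that $B^{\times}=\{\lambda1+d:\lambda\ne0\}$ is open.  It therefore remains only to prove that $\inv_B$ is continuous on $B^{\times}$.  Because $B$ is metrizable, sequential continuity will suffice.

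The plan is to express inversion coordinatewise through the map $\Theta$ of Lemma~\ref{lem:theta-continuous}.  For $a=\lambda1+d\in B^{\times}$ we claim
\[
a^{-1}=\lambda^{-1}1+\bigl(\Theta(\lambda,d_j)\bigr)_{j\ge1}.
\]
To verify this, first note that $\Theta(\lambda,0)=0$, so the sequence on the right has support contained in $\supp(d)$ and lies in $D$.  Multiplying coordinatewise by $\lambda1+d$ gives, in each coordinate, the identity $(\lambda1+d_j)(\lambda1+d_j)^{-1}=1$ computed in $C^{\#}$.  Since $B$ embeds into the product $\prod_j C^{\#}$ compatibly with scalar parts, the element on the right is a two-sided inverse of $a$.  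Alternatively, one can compare directly with the finite Neumann formula \eqref{eq:finite-neumann}.

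For continuity, take $a_k=\lambda_k1+d^{(k)}\to a=\lambda1+d$ in $B^{\times}$.  By \eqref{eq:B-seminorms}, this means $\lambda_k\to\lambda\ne0$ and $d^{(k)}_j\to d_j$ in $C$ for each fixed $j$.  By Lemma~\ref{lem:theta-continuous}, $\Theta(\lambda_k,d^{(k)}_j)\to\Theta(\lambda,d_j)$ for each $j$, and clearly $\lambda_k^{-1}\to\lambda^{-1}$.  For each $m$, the seminorm $p_m(a_k^{-1}-a^{-1})$ involves only the scalar part and the finitely many coordinates $j\le m$, so it tends to $0$.  Hence $a_k^{-1}\to a^{-1}$.

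The main obstacle is conceptual rather than technical.  One must avoid the finite Neumann series, since its length $r(d)$ is not locally bounded, and use the uniform continuous map $\Theta$ instead.  Once the coordinatewise formula for $a^{-1}$ is established, with its support preserved so that it lies in $D$, the product-topology argument above is routine.
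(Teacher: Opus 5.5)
Your proposal is correct and follows essentially the same route as the paper's proof. Both arguments rest on the coordinatewise formula $a^{-1}=\lambda^{-1}1+\bigl(\Theta(\lambda,d_j)\bigr)_{j\ge1}$, with support contained in $\supp(d)$ and continuity of $\Theta$ coming from Banach inversion in $C^{\#}$; the only difference is that you verify the final continuity step with sequences (using metrizability), whereas the paper invokes the universal property of the product topology.
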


\begin{proof}
By Proposition~\ref{prop:locally-m-convex}, $B$ is a Hausdorff metrizable locally $m$-convex algebra.  By Proposition~\ref{prop:B-units}, its unit group is open.

Let
\[
a=\lambda1+d\in B^{\times}.
\]
Coordinatewise inversion in $C^{\#}$ gives
\begin{equation}\label{eq:B-inverse-coordinate}
a^{-1}
=
\lambda^{-1}1+
\bigl(\Theta(\lambda,d_j)\bigr)_{j\ge1}.
\end{equation}
Since $\Theta(\lambda,0)=0$, we have
\[
\supp\bigl((\Theta(\lambda,d_j))_{j\ge1}\bigr)
\subseteq\supp(d),
\]
so the sequence in \eqref{eq:B-inverse-coordinate} belongs to $D$.

For every fixed $j\ge1$, the map
\[
B^{\times}\longrightarrow N,
\qquad
\lambda1+d\longmapsto\Theta(\lambda,d_j),
\]
is continuous by Lemma~\ref{lem:theta-continuous}.  The universal property of the product topology gives continuity of
\[
B^{\times}\longrightarrow N^{\N},
\qquad
\lambda1+d\longmapsto
\bigl(\Theta(\lambda,d_j)\bigr)_{j\ge1}.
\]
Its image lies in $D$, and $D$ has the subspace topology from $N^{\N}$.  Hence the same map is continuous with codomain $D$.  Together with continuity of
\[
\lambda\longmapsto\lambda^{-1}
\quad\text{on }\C^{\times},
\]
formula \eqref{eq:B-inverse-coordinate} proves continuity of
\[
\inv_B:B^{\times}\longrightarrow B.
\]
Thus $B$ is a CIA.
\end{proof}

\begin{remark}[No uniform nilpotence is used]\label{rem:no-uniform}
The proof does not assert that the exponent $r(n)$ in $n^{r(n)}=0$ is locally bounded on $N$.  The finite Neumann formula is used only to show that $\Theta(\lambda,n)$ belongs to $N$ for each fixed $n$.  Continuity comes from Banach-algebra inversion in $C^{\#}$.
\end{remark}

\section{The completion and its unit group}

We now remove the finite-support condition by completion.  The ambient product algebra carries the same finite-coordinate seminorms, so it is both the natural completion of $B$ and the space in which noninvertibility can escape.

\subsection{The complete ambient algebra}

Define
\begin{equation}\label{eq:def-Btilde}
\wtB:=\C1\oplus C^{\N}.
\end{equation}
Its multiplication is the common-scalar coordinatewise product
\begin{equation}\label{eq:Btilde-product}
(\lambda1+c)(\mu1+e)
=
\lambda\mu1+
\bigl(\lambda e_j+\mu c_j+c_je_j\bigr)_{j\ge1}.
\end{equation}
The topology is generated by
\begin{equation}\label{eq:Btilde-seminorms}
q_m(\lambda1+c)
:=
|\lambda|+
\max_{1\le j\le m}\|c_j\|_C
\qquad(m\ge1).
\end{equation}

\begin{proposition}[Complete ambient algebra]\label{prop:Btilde-complete}
The algebra $\wtB$ is a Fr\'echet locally $m$-convex algebra.
\end{proposition}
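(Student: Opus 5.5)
The plan is to verify three things in turn: the $q_m$ are submultiplicative seminorms, they make $\wtB$ Hausdorff and metrizable, and $\wtB$ is complete.

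\emph{Submultiplicativity.} For $a=\lambda1+c$ and $b=\mu1+e$, formula \eqref{eq:Btilde-product} gives the $j$th coordinate of $ab$ as $\lambda e_j+\mu c_j+c_je_j$. The estimate in the proof of Proposition~\ref{prop:locally-m-convex} used only the Banach-algebra inequality in $C$ and never that the coordinates lie in $N$ or have finite support. It therefore transfers verbatim and yields
\[
q_m(ab)\le q_m(a)q_m(b)
\qquad(m\ge1).
\]
In particular multiplication is jointly continuous and $\wtB$ is a locally $m$-convex algebra. The family $(q_m)_{m\ge1}$ is countable and increasing in $m$. It separates points, since $q_m(\lambda1+c)=0$ for all $m$ forces $\lambda=0$ and $c_j=0$ for every $j$. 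Hence $\wtB$ is Hausdorff and metrizable.

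\emph{Completeness.} This is the only step requiring any care, and I expect it to be routine. The plan is to identify $\wtB$ topologically with the product $\C\times C^{\N}$ via $\lambda1+c\mapsto(\lambda,(c_j)_{j\ge1})$. The seminorms $q_m$ generate exactly the product topology: each coordinate projection is $q_m$-continuous for $m\ge j$, and conversely each $q_m$ is continuous for the product topology. A countable product of complete metrizable spaces is complete. Directly, let $(a^{(k)})_{k\ge1}=(\lambda^{(k)}1+c^{(k)})_{k\ge1}$ be a Cauchy sequence in $\wtB$. Applying $q_m$ with $m\ge j$ shows that $(\lambda^{(k)})_{k\ge1}$ is Cauchy in $\C$ and $(c^{(k)}_j)_{k\ge1}$ is Cauchy in $C$ for each fixed $j$. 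Since $C$ is a Banach space, these converge to some $\lambda$ and $c_j$. Put $a:=\lambda1+(c_j)_{j\ge1}\in\wtB$; no support or growth condition is needed, since $C^{\N}$ is the full product. Then $q_m(a^{(k)}-a)\to0$ for each $m$, because $q_m$ involves only finitely many coordinates, and so $a^{(k)}\to a$.

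A complete metrizable locally convex space is Fr\'echet. Combined with submultiplicativity, this shows that $\wtB$ is a Fr\'echet locally $m$-convex algebra. The only point to watch is that completeness uses that $C$, not $N$, is complete.
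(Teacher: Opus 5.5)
Your proposal is correct and follows the paper's proof essentially step for step. It transfers the submultiplicativity estimate from Proposition~\ref{prop:locally-m-convex} to the $q_m$, and it obtains completeness from coordinatewise Cauchy sequences in $\C$ and in the Banach space $C$, using that each $q_m$ involves only finitely many coordinates; your added identification with the product $\C\times C^{\N}$ is accurate but not needed.
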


\begin{proof}
The estimate in Proposition~\ref{prop:locally-m-convex}, with $d_j,e_j\in C$, gives
\[
q_m(ab)\le q_m(a)q_m(b)
\qquad(a,b\in\wtB).
\]
Thus $\wtB$ is locally $m$-convex and metrizable.

Let $(a^{(k)})_{k\ge1}$ be Cauchy for all seminorms $q_m$, and write
\[
a^{(k)}=\lambda_k1+(c_j^{(k)})_{j\ge1}.
\]
Since
\[
|\lambda_k-\lambda_\ell|
\le q_1(a^{(k)}-a^{(\ell)}),
\]
the scalar sequence $(\lambda_k)$ is Cauchy in $\C$ and converges to some $\lambda\in\C$.  For each fixed $j$,
\[
\|c_j^{(k)}-c_j^{(\ell)}\|_C
\le q_j(a^{(k)}-a^{(\ell)}),
\]
so $(c_j^{(k)})_k$ is Cauchy in the Banach space $C$ and converges to some $c_j\in C$.  Put
\[
a:=\lambda1+(c_j)_{j\ge1}\in\wtB.
\]
For every fixed $m$, convergence of the finitely many coordinates $1\le j\le m$ gives
\[
q_m(a^{(k)}-a)\longrightarrow0.
\]
Hence $a^{(k)}\to a$ in $\wtB$, proving completeness.
\end{proof}

\subsection{Identification of the completion}

The embedding $N\subseteq C$ induces a unital algebra embedding
\begin{equation}\label{eq:B-embedding}
\iota:B\longrightarrow\wtB,
\qquad
\iota(\lambda1+d):=\lambda1+d.
\end{equation}
The seminorm identities
\[
q_m(\iota(a))=p_m(a)
\]
show that $\iota$ is a topological embedding.

\begin{proposition}[Completion formula]\label{prop:completion-formula}
The image $\iota(B)$ is dense in $\wtB$.  Consequently,
\begin{equation}\label{eq:completion-isomorphism}
\widehat B\cong\wtB=\C1\oplus C^{\N}
\end{equation}
as complete locally convex algebras.
\end{proposition}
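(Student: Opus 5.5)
To prove density, I will fix an arbitrary element $a=\lambda1+c\in\wtB$ with $c=(c_j)_{j\ge1}\in C^{\N}$, a seminorm index $m\ge1$, and $\varepsilon>0$, and produce $b\in B$ with $q_m(\iota(b)-a)<\varepsilon$. Since the seminorms $q_m$ increase with $m$ and generate the topology, sets of the form $\{x: q_m(x-a)<\varepsilon\}$ form a neighborhood base at $a$, so this suffices. The construction is explicit: by density of $N$ in $C$ (Theorem~\ref{thm:dixon}), choose for each $1\le j\le m$ an element $n_j\in N$ with $\|c_j-n_j\|_C<\varepsilon$, and set $n_j:=0$ for $j>m$. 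Then $d:=(n_j)_{j\ge1}$ has support contained in $\{1,\dots,m\}$, so $d\in D$ by \eqref{eq:def-D}, and $b:=\lambda1+d\in B$ satisfies
\[
q_m(\iota(b)-a)=\max_{1\le j\le m}\|n_j-c_j\|_C<\varepsilon .
\]
This is the key point where the choice of the product-subspace topology (rather than the direct-sum topology) is used: the coordinates beyond $m$ are unconstrained by $q_m$, so truncating to finite support costs nothing.

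For the consequence \eqref{eq:completion-isomorphism}, I will invoke the standard uniqueness of the Hausdorff completion of a locally convex space: if $E$ embeds topologically and linearly as a dense subspace of a complete Hausdorff locally convex space $F$, then $F$ is (canonically isomorphic to) $\widehat E$. Here $\iota$ is a topological linear embedding by the identities $q_m(\iota(a))=p_m(a)$, $\wtB$ is complete and Hausdorff by Proposition~\ref{prop:Btilde-complete}, and $\iota(B)$ is dense by the first step. So $\widehat B\cong\wtB$ as locally convex spaces.

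It remains to check the algebra structure matches, which I expect to be the only point requiring a little care. The multiplication of $\widehat B$ is by definition the unique continuous extension of the (jointly continuous, by Proposition~\ref{prop:locally-m-convex}) multiplication of $B$. The multiplication \eqref{eq:Btilde-product} of $\wtB$ is jointly continuous (submultiplicativity of $q_m$) and $\iota$ is a unital algebra homomorphism, since \eqref{eq:B-product} is the restriction of \eqref{eq:Btilde-product}. Two jointly continuous bilinear maps on $\wtB\times\wtB$ agreeing on the dense subset $\iota(B)\times\iota(B)$ coincide, as $\wtB$ is Hausdorff; hence the linear isomorphism is also an isomorphism of unital algebras, which finishes the proof.
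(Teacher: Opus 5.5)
Your proposal is correct and follows the paper's proof. You use the same density argument: approximate the first $m$ coordinates by elements of $N$, truncate to finite support, and estimate with $q_m$. You then get the isomorphism from the universal property of the Hausdorff completion. Your extra check that the two multiplications coincide, because they are jointly continuous bilinear maps agreeing on the dense set $\iota(B)\times\iota(B)$, spells out what the paper leaves implicit in the phrase ``as complete locally convex algebras''.
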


\begin{proof}
Fix
\[
a=\lambda1+(c_j)_{j\ge1}\in\wtB,
\qquad
m\ge1,
\qquad
\varepsilon>0.
\]
Since $N$ is dense in $C$, for each $1\le j\le m$ choose $n_j\in N$ such that
\[
\|c_j-n_j\|_C<\varepsilon.
\]
Define
\[
d^{(m)}:=(n_1,\ldots,n_m,0,0,\ldots)\in D.
\]
Then
\[
q_m\bigl(a-(\lambda1+d^{(m)})\bigr)
=
\max_{1\le j\le m}\|c_j-n_j\|_C
<\varepsilon.
\]
Thus every basic neighborhood of $a$ meets $\iota(B)$, so $\iota(B)$ is dense.  Proposition~\ref{prop:Btilde-complete} and the universal property of Hausdorff completion yield \eqref{eq:completion-isomorphism}.
\end{proof}

\subsection{Exact description of the completed units}

The completed unit group is determined coordinatewise, but all coordinate inverses must have the same scalar part.  For every $j\ge1$, define the continuous unital algebra homomorphism
\begin{equation}\label{eq:rho-j}
\rho_j:\wtB\longrightarrow C^{\#},
\qquad
\rho_j(\lambda1+c):=\lambda1+c_j.
\end{equation}

\begin{proposition}[Units of the completion]\label{prop:Btilde-units}
The unit group of $\wtB$ is
\begin{equation}\label{eq:Btilde-units}
\wtB^{\times}
=
\left\{
\lambda1+c:
\lambda\ne0,
\ \lambda1+c_j\in(C^{\#})^{\times}
\text{ for every }j\ge1
\right\}.
\end{equation}
\end{proposition}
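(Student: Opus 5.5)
We prove the two inclusions in \eqref{eq:Btilde-units} separately, using the homomorphisms $\rho_j$ of \eqref{eq:rho-j} for necessity and an explicit coordinatewise inverse for sufficiency.

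\emph{Necessity.} Let $a=\lambda1+c\in\wtB^{\times}$ with inverse $b$.

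First consider the scalar part. The scalar map $\chi:\wtB\to\C$, $\lambda1+c\mapsto\lambda$, is a unital algebra homomorphism, exactly as in Proposition~\ref{prop:B-units}; this is read off from \eqref{eq:Btilde-product}. Hence $\chi(a)\chi(b)=1$, and so $\lambda\ne0$.

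Next consider each coordinate. For each $j$, the map $\rho_j$ is a unital algebra homomorphism; again \eqref{eq:Btilde-product} shows that the $j$th coordinate of a product depends only on the scalars and the $j$th coordinates, matching \eqref{eq:unitization-product}. Thus $\rho_j(a)\rho_j(b)=\rho_j(b)\rho_j(a)=1$ in $C^{\#}$. Therefore $\lambda1+c_j\in(C^{\#})^{\times}$ for every $j$.

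\emph{Sufficiency.} Suppose $\lambda\ne0$ and $\lambda1+c_j\in(C^{\#})^{\times}$ for all $j$.

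The main point is that all coordinate inverses share one scalar part, so that they assemble into a single element of $\wtB$. Write $(\lambda1+c_j)^{-1}=\mu_j1+e_j$ with $e_j\in C$. Applying the scalar-part character of $C^{\#}$ (a unital homomorphism $C^{\#}\to\C$, since $C$ is an ideal) to $(\lambda1+c_j)(\mu_j1+e_j)=1$ gives $\lambda\mu_j=1$. Hence $\mu_j=\lambda^{-1}$ for every $j$, independent of $j$.

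Now set
\[
b:=\lambda^{-1}1+(e_j)_{j\ge1}\in\wtB,
\]
which is legitimate since $C^{\N}$ has no support or growth restriction. Then
\[
\rho_j(ab)=\rho_j(a)\rho_j(b)=1
\quad\text{and}\quad
\rho_j(ba)=1
\]
for every $j$, and the scalar parts of $ab$ and $ba$ equal $\lambda\lambda^{-1}=1$. Since an element of $\wtB$ is determined by its scalar part together with all $\rho_j$-images, this gives $ab=ba=1$. Hence $a\in\wtB^{\times}$.

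I expect no real obstacle in this argument. The only points needing care are that $\chi$ and each $\rho_j$ are multiplicative, and that the inverse of $\lambda1+c_j$ in $C^{\#}$ has scalar part exactly $\lambda^{-1}$. The latter relies on $C$ being a proper ideal of the unitization $C^{\#}$, which Lemma~\ref{lem:nonunital} makes unambiguous.
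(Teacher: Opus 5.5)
Your proof is correct and follows essentially the same route as the paper: necessity via the scalar character and the coordinate homomorphisms $\rho_j$, and sufficiency via the scalar quotient of $C^{\#}$, which forces every coordinate inverse to have scalar part $\lambda^{-1}$ so that the coordinate inverses assemble into a single element $\lambda^{-1}1+b$ of $\wtB$. One minor remark: Lemma~\ref{lem:nonunital} is not actually needed, since $C^{\#}=\C1\oplus C$ is the formal unitization, in which $C$ is a codimension-one ideal and the scalar quotient is a unital homomorphism whether or not $C$ has an identity.
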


\begin{proof}
Let $a=\lambda1+c\in\wtB^{\times}$.  The scalar projection
\[
\chi_{\infty}:\wtB\longrightarrow\C,
\qquad
\chi_{\infty}(\lambda1+c):=\lambda,
\]
is a unital algebra homomorphism, so
\[
\lambda=\chi_{\infty}(a)\in\C^{\times}.
\]
Moreover, \eqref{eq:rho-j} gives
\[
\rho_j(a)\in(C^{\#})^{\times}
\qquad(j\ge1).
\]
This proves necessity.

Conversely, assume that the conditions on the right-hand side of \eqref{eq:Btilde-units} hold.  The quotient homomorphism
\[
\pi:C^{\#}\longrightarrow\C,
\qquad
\pi(\alpha1+x):=\alpha,
\]
satisfies
\[
\pi\bigl((\lambda1+c_j)^{-1}\bigr)=\lambda^{-1}.
\]
Hence there is a unique $b_j\in C$ such that
\[
(\lambda1+c_j)^{-1}=\lambda^{-1}1+b_j.
\]
Set
\[
b:=(b_j)_{j\ge1}\in C^{\N}.
\]
By coordinatewise multiplication,
\[
(\lambda1+c)(\lambda^{-1}1+b)
=
(\lambda^{-1}1+b)(\lambda1+c)
=1.
\]
Thus $\lambda1+c\in\wtB^{\times}$.
\end{proof}

\subsection{Inversion remains continuous}

The counterexample will therefore not come from discontinuity of inversion.  We record this stronger property before proving non-openness.

Define the open set
\[
\Omega
:=
\left\{
(\lambda,x)\in\C\times C:
\lambda1+x\in(C^{\#})^{\times}
\right\}.
\]
Since the scalar quotient of an invertible element is invertible, every $(\lambda,x)\in\Omega$ satisfies $\lambda\ne0$.  Define
\begin{equation}\label{eq:Theta-C}
\Theta_C:\Omega\longrightarrow C,
\qquad
\Theta_C(\lambda,x)
:=(\lambda1+x)^{-1}-\lambda^{-1}1.
\end{equation}
The map $\Theta_C$ is continuous because inversion in the Banach algebra $C^{\#}$ is continuous.

\begin{proposition}[Continuous inversion after completion]\label{prop:Btilde-inversion}
The inversion map
\[
\inv_{\wtB}:\wtB^{\times}\longrightarrow\wtB
\]
is continuous.
\end{proposition}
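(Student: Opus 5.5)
The proof will mirror the argument for Theorem~\ref{thm:B-CIA}, with $\Theta$ replaced by the map $\Theta_C$ of \eqref{eq:Theta-C}. First I will write down the inverse explicitly. By Proposition~\ref{prop:Btilde-units} and its proof, every $a=\lambda1+c\in\wtB^{\times}$ satisfies $(\lambda,c_j)\in\Omega$ for all $j\ge1$. Its inverse is
\[
a^{-1}=\lambda^{-1}1+\bigl(\Theta_C(\lambda,c_j)\bigr)_{j\ge1},
\]
since $b_j=(\lambda1+c_j)^{-1}-\lambda^{-1}1=\Theta_C(\lambda,c_j)$.

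Second, I will check continuity of the scalar part. The map $a\mapsto\lambda$ is the restriction of $\chi_\infty$, which is continuous because $|\lambda|\le q_1(a)$. It takes values in $\C^{\times}$ on $\wtB^{\times}$, so $a\mapsto\lambda^{-1}$ is continuous on $\wtB^{\times}$.

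Third, I will check continuity of each coordinate. Fix $j$. The map $\wtB^{\times}\to\Omega$, $a\mapsto(\lambda,c_j)$, is continuous, because $|\lambda|+\|c_j\|_C\le 2q_j(a)$. It lands in $\Omega$ by the unit criterion \eqref{eq:Btilde-units}. Composing with the continuous map $\Theta_C$ shows that $a\mapsto\Theta_C(\lambda,c_j)\in C$ is continuous.

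Finally, the topology on the $C^{\N}$-part of $\wtB$ is the product topology, since the seminorms $q_m$ control exactly finitely many coordinates together with the scalar. The universal property of the product therefore gives continuity of $a\mapsto(\Theta_C(\lambda,c_j))_{j}\in C^{\N}$. Concretely, for a net $a_\alpha\to a$ in $\wtB^{\times}$ and a fixed $m$, we have $q_m(a_\alpha^{-1}-a^{-1})\le|\lambda_\alpha^{-1}-\lambda^{-1}|+\max_{j\le m}\|\Theta_C(\lambda_\alpha,c_{\alpha,j})-\Theta_C(\lambda,c_j)\|_C$. This is a finite maximum of quantities each tending to $0$. Adding the scalar part gives continuity of $\inv_{\wtB}$.

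The only point needing care is the domain: each coordinate map must land in $\Omega$, the open set where $\Theta_C$ is defined and continuous. This is exactly what Proposition~\ref{prop:Btilde-units} provides. No uniformity in $j$ is needed, since each seminorm $q_m$ tests only finitely many coordinates.
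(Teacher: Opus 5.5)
Your proof is correct and follows the paper's argument: you write $a^{-1}=\lambda^{-1}1+\bigl(\Theta_C(\lambda,c_j)\bigr)_{j\ge1}$ via the unit criterion, obtain continuity of each coordinate by composing with the continuous map $\Theta_C$ on $\Omega$, and conclude from the universal property of the product topology together with continuity of $\lambda\mapsto\lambda^{-1}$. Your explicit $q_m$-estimate and your check that each coordinate map lands in $\Omega$ only spell out details the paper leaves implicit.
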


\begin{proof}
For $a=\lambda1+c\in\wtB^{\times}$, Proposition~\ref{prop:Btilde-units} and \eqref{eq:Theta-C} give
\begin{equation}\label{eq:Btilde-inverse}
a^{-1}
=
\lambda^{-1}1+
\bigl(\Theta_C(\lambda,c_j)\bigr)_{j\ge1}.
\end{equation}
For each fixed $j$, the map
\[
\wtB^{\times}\longrightarrow C,
\qquad
\lambda1+c\longmapsto\Theta_C(\lambda,c_j),
\]
is continuous.  Therefore
\[
\lambda1+c
\longmapsto
\bigl(\Theta_C(\lambda,c_j)\bigr)_{j\ge1}
\]
is continuous as a map into the product $C^{\N}$.  Together with continuity of $\lambda\mapsto\lambda^{-1}$, formula \eqref{eq:Btilde-inverse} proves the claim.
\end{proof}

\section{Noninvertibility escaping to infinity}\label{sec:escape}

The completion $\wtB$ has continuous inversion on its unit group.  It remains to test openness.  The witness $z$ fixed in \eqref{eq:fixed-z} is placed in coordinates which eventually lie beyond every seminorm $q_m$.

For each $k\ge1$, define
\begin{equation}\label{eq:e-k}
e^{(k)}
:=(0,\ldots,0,\underset{k\text{th}}{z},0,\ldots)
\in C^{\N}\subseteq\wtB.
\end{equation}

\begin{lemma}[Escape in the product topology]\label{lem:e-k-zero}
The sequence $(e^{(k)})_{k\ge1}$ satisfies
\[
e^{(k)}\longrightarrow0
\quad\text{in }\wtB.
\]
\end{lemma}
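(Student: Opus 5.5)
The plan is to verify convergence seminorm by seminorm, since the topology of $\wtB$ is generated by the family $(q_m)_{m\ge1}$ in \eqref{eq:Btilde-seminorms}. A sequence converges to $0$ in a locally convex space whose topology is generated by a family of seminorms if and only if it converges to $0$ in each seminorm. It therefore suffices to show that for every fixed $m\ge1$,
\[
q_m\bigl(e^{(k)}\bigr)\longrightarrow0\qquad(k\to\infty).
\]

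To compute this quantity, note that $e^{(k)}$ has scalar part $0$ and its only possibly nonzero coordinate is the $k$th one, which equals $z$. Hence, for every $m\ge1$,
\[
q_m\bigl(e^{(k)}\bigr)=\max_{1\le j\le m}\bigl\|e^{(k)}_j\bigr\|_C=
\begin{cases}
\|z\|_C, & k\le m,\\
0, & k>m.
\end{cases}
\]
So for each fixed $m$ the sequence $q_m(e^{(k)})$ is eventually identically zero, namely for all $k>m$. This gives the convergence in every seminorm, and therefore $e^{(k)}\to0$ in $\wtB$.

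There is no real obstacle here; the only point to state explicitly is that each $q_m$ inspects only the finitely many coordinates $1\le j\le m$. This is exactly the escape phenomenon described in the introduction: the fixed nonzero element $z$ (nonzero because $1+z$ is noninvertible while $1$ is invertible) moves past every finite coordinate window.
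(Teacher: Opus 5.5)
Your proof is correct and matches the paper's argument: for each fixed $m$, $q_m(e^{(k)})=0$ whenever $k>m$, and convergence in every $q_m$ is exactly convergence to $0$ in $\wtB$. The extra case $k\le m$ and the remark that $z\neq0$ are accurate but not needed for the conclusion.
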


\begin{proof}
Fix $m\ge1$.  For every $k>m$, the first $m$ coordinates of $e^{(k)}$ vanish.  Hence
\[
q_m(e^{(k)})=0
\qquad(k>m).
\]
Therefore $q_m(e^{(k)})\to0$ for every $m$, which is exactly convergence in $\wtB$.
\end{proof}

\begin{lemma}[Persistent noninvertibility]\label{lem:e-k-nonunit}
For every $k\ge1$,
\[
1+e^{(k)}\notin\wtB^{\times}.
\]
\end{lemma}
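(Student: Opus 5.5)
The plan is to push the question down to a single coordinate using the homomorphism $\rho_k$ from \eqref{eq:rho-j}, and then apply the choice of $z$ in \eqref{eq:fixed-z}. There are two equivalent routes, and I would state the argument via the first one.

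\emph{Route via $\rho_k$.} Fix $k\ge1$ and suppose, for contradiction, that $1+e^{(k)}\in\wtB^{\times}$. The map $\rho_k:\wtB\to C^{\#}$ is a unital algebra homomorphism. Unital homomorphisms send units to units, so
\[
\rho_k\bigl(1+e^{(k)}\bigr)\in(C^{\#})^{\times}.
\]
By the definition of $\rho_k$, this element is $1\cdot 1+e^{(k)}_k=1+z$. That contradicts \eqref{eq:fixed-z}.

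\emph{Route via Proposition~\ref{prop:Btilde-units}.} Equivalently, one can read the conclusion straight off the exact unit criterion \eqref{eq:Btilde-units}. The element $1+e^{(k)}$ has scalar part $\lambda=1\ne0$. Its $k$th coordinate condition, however, asks for $1+z\in(C^{\#})^{\times}$, and this fails. So $1+e^{(k)}$ lies outside the set on the right-hand side of \eqref{eq:Btilde-units}.

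The only points needing care are that $\rho_k$ really is unital and multiplicative, and that the $k$th coordinate of $e^{(k)}$ is exactly $z$. Multiplicativity follows by comparing \eqref{eq:Btilde-product} with \eqref{eq:unitization-product} in the $k$th coordinate, and unitality is immediate since $\rho_k(1)=1$. I do not expect a genuine obstacle here. All the substance was already placed in Lemma~\ref{lem:bad-z}, which produced $z$, and in Proposition~\ref{prop:Btilde-units}, which gives the coordinatewise description of the units.
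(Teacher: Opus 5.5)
Your proposal is correct, and your first route is exactly the paper's proof: you apply the unital homomorphism $\rho_k$, observe that $\rho_k(1+e^{(k)})=1+z$, and contradict the choice of $z$ in \eqref{eq:fixed-z}. Your second route via Proposition~\ref{prop:Btilde-units} is also valid but adds nothing new, since the necessity half of that proposition is itself proved by the same $\rho_j$ argument.
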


\begin{proof}
Apply the coordinate homomorphism $\rho_k$ from \eqref{eq:rho-j}.  By \eqref{eq:e-k} and \eqref{eq:fixed-z},
\[
\rho_k(1+e^{(k)})=1+z\notin(C^{\#})^{\times}.
\]
If $1+e^{(k)}$ were invertible in $\wtB$, its image under the unital algebra homomorphism $\rho_k$ would be invertible in $C^{\#}$.  This contradiction proves the claim.
\end{proof}

\begin{theorem}[Failure of openness]\label{thm:failure-openness}
The unit group $\wtB^{\times}$ is not open in $\wtB$.  Consequently, $\wtB$ is not a continuous inverse algebra.
\end{theorem}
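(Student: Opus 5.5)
The plan is to argue by contradiction directly from the two lemmas just proved, using only that $1\in\wtB^{\times}$. Suppose $\wtB^{\times}$ were open in $\wtB$. Since $1\in\wtB^{\times}$, it would then be an open neighborhood of $1$. Translation $x\mapsto 1+x$ is a homeomorphism of the topological vector space $\wtB$, so the set $U:=\wtB^{\times}-1$ would be an open neighborhood of $0$.

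By Lemma~\ref{lem:e-k-zero}, $e^{(k)}\to0$ in $\wtB$. Hence there is $k_0$ with $e^{(k)}\in U$ for all $k\ge k_0$, i.e.\ $1+e^{(k)}\in\wtB^{\times}$. This contradicts Lemma~\ref{lem:e-k-nonunit}, which shows that $1+e^{(k)}\notin\wtB^{\times}$ for every $k$. Equivalently, the noninvertible elements $1+e^{(k)}$ converge to $1$, so
\[
1\in\overline{\wtB\setminus\wtB^{\times}},
\]
and $\wtB^{\times}$ is not open.

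For the consequence, I would recall that a CIA must have an open unit group by definition. So $\wtB$ fails the CIA axioms, even though inversion is continuous by Proposition~\ref{prop:Btilde-inversion}. Via the identification $\widehat B\cong\wtB$ of Proposition~\ref{prop:completion-formula}, the same conclusion holds for $\widehat B$. This is the form needed for Theorem~\ref{thm:main}.

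There is no real obstacle here: all analytic content sits in Lemmas~\ref{lem:e-k-zero} and~\ref{lem:e-k-nonunit}. The only point needing care is to use convergence in the product topology given by the seminorms $q_m$, which is exactly what Lemma~\ref{lem:e-k-zero} supplies, rather than any norm estimate.
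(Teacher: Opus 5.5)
Your proof is correct and is essentially the paper's argument: the paper likewise combines Lemma~\ref{lem:e-k-zero} and Lemma~\ref{lem:e-k-nonunit} to show that the noninvertible elements $1+e^{(k)}$ converge to $1$, so $1$ is not an interior point of $\wtB^{\times}$, and your contradiction via the translated neighborhood $\wtB^{\times}-1$ is only a cosmetic rephrasing. The consequence follows, as you say, because openness of the unit group is one of the CIA axioms.
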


\begin{proof}
By Lemma~\ref{lem:e-k-zero},
\[
1+e^{(k)}\longrightarrow1.
\]
By Lemma~\ref{lem:e-k-nonunit},
\[
1+e^{(k)}\in\wtB\setminus\wtB^{\times}
\qquad(k\ge1).
\]
Thus every neighborhood of $1$ contains a noninvertible element.  Hence
\[
1\notin\operatorname{int}_{\wtB}(\wtB^{\times}),
\]
so $\wtB^{\times}$ is not open.
\end{proof}

\begin{proof}[Proof of Theorem~\ref{thm:main}]
Choose the Dixon pair $(C,N)$ from Theorem~\ref{thm:dixon}, and construct $B$ by \eqref{eq:def-D} and \eqref{eq:def-B}.  Theorem~\ref{thm:B-CIA} gives
\[
B\text{ is a Hausdorff metrizable locally }m\text{-convex CIA}.
\]
Propositions~\ref{prop:Btilde-complete} and \ref{prop:completion-formula} give
\[
\widehat B\cong\wtB
\quad\text{with }\wtB\text{ Fr\'echet and locally }m\text{-convex}.
\]
Proposition~\ref{prop:Btilde-inversion} gives
\[
\inv_{\widehat B}\text{ is continuous on }\widehat B^{\times}.
\]
Theorem~\ref{thm:failure-openness} gives
\[
\widehat B^{\times}\text{ is not open}.
\]
Therefore $\widehat B$ is not a CIA.
\end{proof}

\section{Consequences and the exact failure mechanism}

The preceding construction separates the algebraic, topological, and spectral roles of the ingredients.  We now record the direct consequence for the completion problem and identify the boundary of the argument.

\subsection{The completion problem}

\begin{corollary}[Negative answer to the CIA completion problem]\label{cor:problem}
The completion of a continuous inverse algebra need not be a continuous inverse algebra.  Hence Problem~10.6.6 of \cite{GN2026} and Problem~VIII.1 of \cite{NeebSurvey} have a negative answer.
\end{corollary}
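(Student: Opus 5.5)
The corollary follows directly from Theorem~\ref{thm:main}, so the plan is simply to specialize that theorem and compare it with the precise wording of the two cited problems. I will take $B$ to be the algebra of \eqref{eq:def-B} built from the Dixon pair $(C,N)$ of Theorem~\ref{thm:dixon}. Theorem~\ref{thm:B-CIA} shows that $B$ is a CIA, and in particular a unital Hausdorff locally convex algebra. This is the class of algebras to which Problem~10.6.6 of \cite{GN2026} and Problem~VIII.1 of \cite{NeebSurvey} refer. Proposition~\ref{prop:completion-formula} identifies its Hausdorff completion as $\widehat B\cong\wtB$, and this is an isomorphism of unital topological algebras. The isomorphism is the continuous extension of the unital embedding $\iota$, so it respects both the multiplication and the identity.

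The next step is to check that being a CIA is invariant under isomorphisms of unital topological algebras. Such an isomorphism maps the unit group onto the unit group. It therefore preserves openness of the unit group, and it conjugates the two inversion maps into each other, so continuity of inversion is also preserved. By Theorem~\ref{thm:failure-openness}, $\wtB^{\times}$ is not open. Hence $\widehat B^{\times}$ is not open, and $\widehat B$ is not a CIA. This exhibits a CIA whose completion fails to be a CIA, which is the first sentence of the corollary.

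Finally, both cited problems ask whether the completion of every CIA is again a CIA, or else for a counterexample. A single counterexample settles them negatively. No step here is a genuine obstacle. The only point needing care is bookkeeping: the problems refer to the Hausdorff completion of a CIA as a locally convex algebra. I must make sure this agrees with the completion $\widehat B$ produced in Proposition~\ref{prop:completion-formula}, including the fact that multiplication extends continuously to it. That extension is guaranteed here because $\wtB$ is locally $m$-convex by Proposition~\ref{prop:Btilde-complete}.
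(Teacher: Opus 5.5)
Your proposal is correct and follows the paper's argument. The paper's proof is the one line ``$B$ is a CIA, whereas $\widehat B$ is not,'' which reads the corollary off Theorem~\ref{thm:main}; the extra bookkeeping you supply, namely transporting non-openness of $\wtB^{\times}$ (Theorem~\ref{thm:failure-openness}) along the unital topological-algebra isomorphism $\widehat B\cong\wtB$, whose multiplicativity follows from density of $\iota(B)$ and continuity of both products, is left implicit there.
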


\begin{proof}
The algebra $B$ in Theorem~\ref{thm:main} is a CIA, whereas its completion $\widehat B$ is not a CIA.
\end{proof}

The example lies in a restrictive class on both sides:
\[
\begin{aligned}
&B&&\text{is metrizable and locally }m\text{-convex},\\
&\widehat B&&\text{is Fr\'echet and locally }m\text{-convex}.
\end{aligned}
\]
Thus failure is not caused by nonmetrizability, incompleteness of the completed algebra, or discontinuity of multiplication.

\subsection{Finite support versus product completion}

The proof can be summarized by the following chain:
\begin{equation}\label{eq:mechanism-chain}
\begin{aligned}
&d\in N^{(\N)}
\Longrightarrow
|\supp(d)|<\infty
\Longrightarrow
(\exists r(d))\ d^{r(d)}=0,\\
&d^{r(d)}=0
\Longrightarrow
\lambda1+d\in B^{\times}
\Longleftrightarrow
\lambda\ne0,\\
&\overline{N}^{\,C}=C
\Longrightarrow
\widehat{N^{(\N)}}=C^{\N},\\
&1+z\notin(C^{\#})^{\times}
\Longrightarrow
1+e^{(k)}\notin\widehat B^{\times},\\
&e^{(k)}\to0
\Longrightarrow
1+e^{(k)}\to1.
\end{aligned}
\end{equation}
The first two lines make $B$ a CIA.  The last three lines destroy openness after completion.

\subsection{Why the construction is noncommutative}

Gl\"ockner and Neeb prove that the completion of a commutative CIA is again a CIA \cite[Proposition~10.6.10]{GN2026}.  The Dixon input is necessarily outside that case.  Indeed, in a commutative Banach algebra every nilpotent element belongs to the Jacobson radical.  If $C$ were both commutative and Jacobson-semisimple, then
\[
N\subseteq\rad(C)=0,
\]
so density of $N$ would imply $C=0$, contrary to Theorem~\ref{thm:dixon}.  The noncommutativity is therefore not cosmetic; it is exactly where dense nil behavior can coexist with a semisimple completion.

\subsection{What survives completion}

The exact unit formula \eqref{eq:Btilde-units} and Proposition~\ref{prop:Btilde-inversion} show that completion preserves more structure than the final negative conclusion alone records:
\[
\begin{aligned}
&\text{multiplication extends continuously},\\
&\text{the completed algebra is Fr\'echet and locally }m\text{-convex},\\
&\text{the full algebraic unit group is explicitly known},\\
&\text{inversion on that unit group is continuous}.
\end{aligned}
\]
Only the local spectral condition at the identity fails:
\[
1\in\overline{\wtB\setminus\wtB^{\times}}.
\]
Thus the obstruction is precisely an infinite-coordinate loss of spectral openness.

\begin{remark}[Abstract form of the construction]\label{rem:abstract}
The proof uses from $C$ only the following two inputs:
\[
\begin{aligned}
&N\subseteq C\text{ is a dense nil subalgebra},\\
&(\exists z\in C)\qquad 1+z\notin(C^{\#})^{\times}.
\end{aligned}
\]
Whenever a Banach algebra $C$ and a dense nil subalgebra $N$ satisfy these conditions, the same finite-support product construction yields a CIA whose completion has nonopen unit group.  Dixon's theorem provides a canonical existence input through $C\ne0$ and $\rad(C)=0$.
\end{remark}

\section*{Acknowledgments and disclosure of AI assistance}

The author thanks the DeepMath team for its agent support. The initial ideas
and inspiration for this work came from the author and were further developed
through discussions with DeepMath agents. GPT models were used to carry out
the constructions, generate the manuscript text, and support adversarial review in
multiple separate conversations. The author has manually reviewed the present
manuscript and currently considers its arguments sound.

AI-assisted checking of the present version has been completed. Additional
independent line-by-line verification by human mathematicians is in progress, and a
separately written human-authored manuscript is also in preparation. The
mathematicians carrying out this additional verification are not authors of the present
version and are not included in its author list. After the verification is completed,
those who have made substantive contributions, approve the resulting manuscript,
and agree to take responsibility for its contents will be added to the author list in
a later version. The listed author takes full responsibility for all contents of the
present version.

\end{document}